\newtheorem{theorem}{Theorem}[section]
\newtheorem{lemma}[theorem]{Lemma}
\newtheorem{proposition}[theorem]{Proposition}
\newtheorem{corollary}[theorem]{Corollary}
\theoremstyle{definition}
\newtheorem{definition}[theorem]{Definition}
\newtheorem{remark}[theorem]{Remark}
\numberwithin{equation}{section}
\newcounter{smallromans}
\newenvironment{romanenumerate}
{\begin{list}{{\normalfont\textrm{(\roman{smallromans})}}}%
    {\usecounter{smallromans}\setlength{\itemindent}{0cm}%
      \setlength{\leftmargin}{5.5ex}\setlength{\labelwidth}{5.5ex}%
      \setlength{\topsep}{0.2ex}\setlength{\partopsep}{0ex}%
      \setlength{\itemsep}{0.2ex}}}%
  {\end{list}}
\newcounter{smallalphs}
\renewcommand{\leq}{\ensuremath{\leqslant}}
\renewcommand{\geq}{\ensuremath{\geqslant}}
\newcommand{\N}{\mathbb{N}}
\newcommand{\R}{\mathbb{R}}
\newcommand{\C}{\mathbb{C}}
\renewcommand{\phi}{\ensuremath{\varphi}}
\renewcommand{\epsilon}{\ensuremath{\varepsilon}}
\newcommand{\spa}{\operatorname{span}}
\begin{document}
\title[The uniqueness-of-norm problem for Calkin algebras]{The uniqueness-of-norm problem for Calkin algebras} \subjclass[2010]%
{Primary
  47L10, 
  46H10, 
    46H40; 
 Secondary   46B25. 
  } 
\author[R.~Skillicorn]{Richard Skillicorn} \address{Department of
  Mathematics and Statistics, Fylde College, Lancaster University,
  Lancaster, LA1 4YF, United Kingdom}
\email{r.skillicorn@lancaster.ac.uk}
\keywords{Bounded, linear operator; Banach space;
  Banach algebra; Calkin algebra; weak Calkin algebra; unique complete norm}
\begin{abstract} 
We examine the question of whether the Calkin algebra of a Banach space must have a unique complete algebra norm. We present a survey of known results, and make the observation that a recent Banach space construction of Argyros and Motakis (preprint, 2015) provides the first negative answer. The parallel question for the weak Calkin algebra also has a negative answer; we demonstrate this using a Banach space of Read (\emph{J.~London Math.\ Soc.}~1989).
\end{abstract}

\maketitle

\section{History of the Problem}%
\label{section1}
\noindent 
Uniqueness-of-norm questions for Banach algebras have been studied for almost as long as Banach algebras themselves. Eidelheit was the first to publish on the topic; his 1940 paper \cite{Eid} showed that the Banach algebra of bounded operators on a Banach space has a unique complete norm, and prompted the natural question of which other Banach algebras share this property. Gel'fand \cite{Gel} quickly followed in 1941 with a similar result in the commutative case: his famous proof that commutative, semisimple Banach algebras have a unique complete norm. 
It was Rickart \cite{Ric} who sought to tie the two ideas together in the 1950's by focusing on the problem of whether a (not necessarily commutative) semisimple Banach algebra always has a unique complete norm; this was solved positively by Johnson in 1967 \cite{johnson1}, and his result remains the major achievement in the area. 
Beyond the semisimple setting (that is, once the Jacobson radical is non-zero), things are much less clear. Dales and Loy \cite{DL} developed theory to handle certain cases when the Jacobson radical is finite-dimensional, however, they noted that even when the radical is one-dimensional a Banach algebra may lack a unique complete norm.  Despite the considerable amount of work done on the uniqueness-of-norm problem, much remains unknown. In this note we address the question for two particular classes of Banach algebras: the Calkin algebra and weak Calkin algebra of a Banach space. Yood observed that in general a Calkin algebra need not be semisimple \cite{Yood}, and the same holds true for weak Calkin algebras, so the question has no immediate answer. As we shall see, there is a good reason for this.
It is time to be precise about some definitions. 

\begin{definition} A Banach algebra $(A,||\cdot||)$ has a \emph{unique complete norm} if any other complete algebra norm on $A$ is equivalent to $||\cdot||$.
\end{definition}

Recall that two norms $||\cdot||$ and $|||\cdot|||$ on a vector space $A$ are \emph{equivalent} if there exist constants $c,C>0$ satisfying $c||a||\leq |||a|||\leq C||a||$ for every $a\in A$. In the case where $||\cdot||$ and $|||\cdot|||$ are complete, it is enough that one of these inequalities holds by the Banach Isomorphism Theorem.

Let $X$ be a Banach space. The scalar field is denoted by $\mathbb{K}$ throughout, which is either $\R$ or $\C$. We write $\mathscr{B}(X)$ for the Banach algebra of bounded, linear operators on $X$, $\mathscr{K}(X)$ for the closed ideal of compact operators, and $\mathscr{W}(X)$ for the closed ideal of weakly compact operators.  

\begin{definition} Let $X$ be a Banach space. The \emph{Calkin algebra} of $X$ is the Banach algebra $\mathscr{B}(X)/\mathscr{K}(X)$, and the \emph{weak Calkin algebra} of $X$ is the Banach algebra $\mathscr{B}(X)/\mathscr{W}(X)$. \end{definition}

Let us be clear about the questions we are considering.
\begin{enumerate}
\item Given a Banach space, must its Calkin algebra have a unique complete norm?
\item Must its weak Calkin algebra have a unique complete norm?
\end{enumerate}

We refer to this as the \emph{uniqueness-of-norm problem} for Calkin algebras.
The questions have their roots in Calkin's study \cite{Cal} of the Banach algebra $\mathscr{B}(H)/\mathscr{K}(H)$ where $H$ is a separable Hilbert space. He showed that there are no proper, non-trivial closed ideals in $\mathscr{B}(H)/\mathscr{K}(H)$; this implies that $\mathscr{B}(H)/\mathscr{K}(H)$ has a unique complete norm since it is semisimple. Once Yood had observed that there are non-semisimple Calkin algebras, Kleinecke defined the ideal of \emph{inessential operators} on a Banach space $X$, denoted $\mathscr{E}(X)$, as a `measure of non-semisimplicity' \cite{Klein}. More specifically, if $\mathscr{E}(X)=\mathscr{K}(X)$ then the Calkin algebra of $X$ is semisimple. The questions have also been considered indirectly by Johnson \cite{johnson}, Tylli \cite{Ty} and Ware \cite{Ware}, in the context of wider work.

In the remainder of this section we give an overview of some known results. The author is not aware of any other survey of the same material, and it seemed helpful to draw together the scattered literature.
We begin with a large class of Banach spaces for which the problem is quickly solved, as our first proposition, which is due to Johnson \cite{johnson}, shows.
We give a proof to demonstrate some of the ideas under consideration.

\begin{proposition} \label{isomorphictosquare} Let $X$ be a Banach space such that $X\cong X\oplus X$. Then $\mathscr{B}(X)/\mathscr{K}(X)$ and $\mathscr{B}(X)/\mathscr{W}(X)$ have a unique complete norm.
\end{proposition}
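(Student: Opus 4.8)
The plan is to fix an arbitrary complete algebra norm $|||\cdot|||$ on $A:=\mathscr{B}(X)/\mathscr{I}$, where $\mathscr{I}$ denotes either $\mathscr{K}(X)$ or $\mathscr{W}(X)$, and to prove that the identity map $j\colon(A,|||\cdot|||)\to(A,\|\cdot\|)$ is continuous; by the Banach Isomorphism Theorem (as noted after the definition of equivalence) this forces the two norms to be equivalent, and since $\mathscr{I}$ enters only as a proper closed ideal the argument runs identically in both cases. As $X\cong X\oplus X$ is impossible for a nonzero finite-dimensional $X$, we may assume $X$ is infinite-dimensional, so that $\mathscr{K}(X)$ is proper; if $X$ is reflexive then $\mathscr{W}(X)=\mathscr{B}(X)$ and the weak Calkin algebra is $\{0\}$, which is trivial. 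Hence we may assume $\mathscr{I}$ is proper and $A\neq\{0\}$. Continuity of $j$ is equivalent, by the closed graph theorem, to the vanishing of the separating ideal $\mathfrak{S}:=\{a\in A:\exists\,(a_n),\ |||a_n|||\to 0,\ \|a_n-a\|\to 0\}$, and a routine check using continuity of multiplication in each norm shows that $\mathfrak{S}$ is a closed two-sided ideal of $(A,\|\cdot\|)$. The whole problem is therefore to prove $\mathfrak{S}=\{0\}$.

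The structural input comes from $X\cong X\oplus X$. Transporting the two coordinate projections along a fixed isomorphism yields idempotents $p_1,p_2\in\mathscr{B}(X)$ with $p_1+p_2=1$ and $p_iX\cong X$, together with operators $e_{12},e_{21}$ implementing $p_1X\cong p_2X$, so that $\{p_1,p_2,e_{12},e_{21}\}$ is a system of $2\times 2$ matrix units summing to the identity, with $p_1\mathscr{B}(X)p_1\cong\mathscr{B}(p_1X)\cong\mathscr{B}(X)$. Because a compact (respectively, weakly compact) operator remains so after composition with the $e_{ij}$ and restriction to a summand, the ideal $\mathscr{I}$ is compatible with this decomposition; hence the matrix units descend to nonzero elements of $A$, giving $A\cong M_2(p_1Ap_1)$ with $p_1Ap_1\cong A$. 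Since multiplication by the fixed elements $p_1,p_2,e_{12},e_{21}$ is continuous in both norms, the Peirce corners are complemented in each norm and $\mathfrak{S}$ splits as the direct sum of its corners $p_i\mathfrak{S}p_j$; moreover the maps $s\mapsto e_{i1}se_{1j}$ identify these corners with one another bi-continuously. Thus $\mathfrak{S}=\{0\}$ if and only if the single corner $p_1\mathfrak{S}p_1$ vanishes.

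The hard part is this final vanishing, and here the matrix structure alone is not enough: $p_1\mathfrak{S}p_1$ is a closed two-sided ideal of $p_1Ap_1\cong A$, so the reduction above is self-similar and on its own merely replaces the problem by an isomorphic copy of itself. The genuine content is an automatic-continuity argument. First, $\mathfrak{S}$ lies in the Jacobson radical of $A$ — a purely algebraic object, hence the same for both norms, since the spectrum of an element is defined by invertibility in $A$ and is independent of the norm — so one is reduced to the radical and, in particular, $\mathfrak{S}$ contains no nonzero idempotent. I would then feed the self-similar matrix decomposition into the stability lemma for separating ideals: for fixed $a\in A$ the descending chain of closed left ideals $\overline{\mathfrak{S}\,a^{\,n}}$ stabilises, and on the stable part right multiplication by $a$ is bounded below. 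This rigidity, combined with the abundant idempotents supplied by $X\cong X\oplus X$, is what I expect to force $p_1\mathfrak{S}p_1=\{0\}$. Carrying this interplay through to the conclusion — rather than merely reducing to a self-similar copy — is the main obstacle, and it is precisely the step that uses that $X$ is isomorphic to its square. Once $\mathfrak{S}=\{0\}$ is established, $j$ is continuous and the two norms are equivalent, uniformly for $\mathscr{I}=\mathscr{K}(X)$ and $\mathscr{I}=\mathscr{W}(X)$.
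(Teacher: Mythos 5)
Your framework is the right one in spirit, but the proof is not complete: the decisive step is missing, and you say so yourself. Everything up to and including the reduction to $p_1\mathfrak{S}p_1=\{0\}$ is sound (the separating space $\mathfrak{S}$ of the identity map is indeed a closed two-sided ideal, and the matrix units coming from $X\cong X\oplus X$ do descend to $A=\mathscr{B}(X)/\mathscr{I}$ and split $\mathfrak{S}$ into mutually isomorphic corners). But, as you observe, this reduction is self-similar and merely reproduces the problem. The two ingredients you then invoke do not close it. First, the inclusion $\mathfrak{S}\subseteq\rad A$ is not ``purely algebraic'': your justification only shows that $\rad A$ is the same set for both norms, which is true but irrelevant; the containment $\mathfrak{S}\subseteq\rad A$ is essentially the content of Johnson's uniqueness-of-norm theorem (proved via irreducible representations and the main boundedness theorem), not a formal consequence of the definition of the radical. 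Second, even granting it, the paper stresses (citing Yood) that Calkin algebras need not be semisimple, so $\mathfrak{S}\subseteq\rad A$ does not yield $\mathfrak{S}=\{0\}$ --- indeed Theorem 2.4 of the paper exhibits a Calkin algebra where the analogous separating ideal is genuinely nonzero. Your final paragraph, which is where the hypothesis $X\cong X\oplus X$ must actually be exploited beyond producing matrix units, ends with ``is what I expect to force'' --- that is an acknowledged gap, and it is exactly the hard part.

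For comparison, the paper does none of this by hand: it quotes Johnson's theorem (\emph{J.~London Math.\ Soc.}~1967, Theorem 3.3 and the remarks after Theorem 3.5) that when $X\cong X\oplus X$ every homomorphism from $\mathscr{B}(X)$ --- and hence from any quotient $\mathscr{B}(X)/I$ with the quotient norm --- into a Banach algebra is continuous, applies it to the identity map onto the renormed quotient, and finishes with the Banach Isomorphism Theorem. Note also that the continuity there runs from the quotient norm to the new norm, the opposite direction to yours; that direction is the natural one because the automatic-continuity input concerns homomorphisms \emph{out of} $\mathscr{B}(X)$. If you want a self-contained argument you would essentially have to reprove Johnson's theorem, whose proof uses $X\cong X\oplus X$ to build, for a putative discontinuous homomorphism, a sequence of operators violating the stability/boundedness lemma; simply citing that lemma and the existence of idempotents is not enough.
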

\begin{proof}  Let $I$ be a closed ideal of $\mathscr{B}(X)$ and consider a complete algebra norm $|||\cdot|||$ on $\mathscr{B}(X)/I$ different from the quotient norm $||\cdot||$. By \cite[Theorem 3.3; see remarks after Theorem 3.5]{johnson} the identity map $\iota: (\mathscr{B}(X)/I,||\cdot||)\to (\mathscr{B}(X)/I,|||\cdot|||)$ is continuous. So there exists $C>0$ such that $|||b+I||| \leq C||b+I||$ for every $b\in \mathscr{B}(X)$, and hence
the two norms on $\mathscr{B}(X)/I$ are equivalent. 
We conclude by setting $I= \mathscr{K}(X)$ or $I= \mathscr{W}(X)$.
\end{proof}

The following Banach spaces have a Calkin algebra with a unique complete norm. The purpose of the list is not to give a comprehensive account, but rather a flavour of the wide variety of spaces sharing the property. Examples (ii)-(viii) follow because $X\cong X\oplus X$.
\begin{romanenumerate}
\item Any finite-dimensional Banach space.
\item $\ell_p$ for $1\leq p\leq \infty$ and $c_0$. This generalises to $\ell_p(\mathbb{I})$ for $1\leq p\leq \infty$ and $c_0(\mathbb{I})$ for any index set $\mathbb{I}$.
\item $\ell_p\oplus \ell_q$ for $1\leq p<q\leq \infty$, and $c_0\oplus \ell_q$ for $1\leq q\leq \infty$. We may generalise this to $X\oplus Y$ where $X,Y$ are Banach spaces such that $X\cong X\oplus X$ and $Y\cong Y\oplus Y$. 
\item $(\bigoplus_{n=1}^{\infty} \ell_r^n)_{\ell_p}$ and $(\bigoplus_{n=1}^{\infty} \ell_r^n)_{c_0}$ for $p,r\in [1,\infty]$.  
\item $C(K)$ for $K$ an infinite compact metric space.
\item $L_p[0,1]$ for $1\leq p\leq \infty$.
\item $C^{(n)}[0,1]$, the $n$-times continuously differentiable functions on $[0,1]$, for all $n\in \N$.
\item Tsirelson's space $T$, and its dual $T^*$.
\item $J_p$ for $1<p<\infty$, the $p$th James space. For $p=2$ this follows from a result of Loy and Willis \cite[Theorem 2.7]{LW}, combined with \cite[Remark 3.9]{ST} of Saksman and Tylli, while Laustsen \cite[Proposition 4.9]{Laus} showed later that $\mathscr{E}(J_p)= \mathscr{K}(J_p)$ for general $1<p<\infty$.
\item $C[0,\omega_1]$, where $\omega_1$ denotes the first uncountable ordinal. Ogden \cite{Og} has shown that all homomorphisms from $\mathscr{B}(C[0,\omega_1])$ into a Banach algebra are continuous, so we deduce the result by the same reasoning as in the proof of Proposition \ref{isomorphictosquare}.
\item $X_{\normalfont{\text{AH}}}$, the Argyros-Haydon solution to the scalar-plus-compact problem \cite{AH}. In this remarkable case we have $\mathscr{B}(X_{\normalfont{\text{AH}}})/\mathscr{K}(X_{\normalfont{\text{AH}}})\cong \mathbb{K}$.
\item $X_{k}$ for $1\leq k\leq \infty$, Tarbard's reworkings of the Argyros-Haydon construction \cite{Tar1}, \cite{Tar2}. For $1\leq k< \infty$ these spaces have finite-dimensional Calkin algebras. The space $X_{\infty}$ has a Calkin algebra isometrically isomorphic to the Banach algebra $\ell_1(\N_0)$ with convolution product, which is semisimple \cite[Theorem 4.6.9(i)]{da}.
\item For every countable compact metric space $M$, Motakis, Puglisi and Zisimopoulou \cite{MPZ} have constructed a Banach space $X_M$ with Calkin algebra isomorphic (as a Banach algebra) to $C(M)$. These Calkin algebras are semisimple \cite[Corollary 3.2.13]{da} and thus have a unique complete norm.
\item $X_{\normalfont{\text{KL}}}$, Kania and Laustsen's \cite{LK} variant of the Argyros-Haydon space whose Calkin algebra is 3-dimensional.
\end{romanenumerate}

\begin{remark} \label{uniquenorm} One can also ask whether a Banach algebra has a unique norm, dropping the completeness assumption. This stronger property holds for the Calkin algebra on certain Banach spaces. Indeed, Meyer \cite{May} has shown that the Calkin algebras of $\ell_p$ for $1\leq p< \infty$ and $c_0$ have a unique norm, and his results were substantially extended by Ware \cite{Ware} to cases (iii) (excluding $q=\infty$, and the generalisation), (iv) (excluding $p=\infty$), (viii), and (ix), listed above. Ware's well-written and carefully referenced thesis \cite{Ware} is an excellent source to find out more about the topic; his perspective differs from ours in that he hardly considers uniqueness of \emph{complete} norms, instead focusing on the more stringent unique-norm condition. \\
Zem\'anek \cite{Zem} noticed that a result of Astala and Tylli provided the first example of a Calkin algebra with a non-unique norm;
later Tylli \cite[Remark 1]{Ty3} gave a related result showing the same is possible for the weak Calkin algebra. 
Ware \cite[Example 3.2.8]{Ware} demonstrated two more examples of Calkin algebras with a non-unique norm: one is the Tarbard space $X_\infty$ in (xii), and the other is a Banach space of Read \cite{read} that we will introduce more fully in a moment. However, all these norms that are not equivalent to the usual quotient norm fail to be complete, so shed no light on our problem.
\end{remark}

The following Banach spaces have a weak Calkin algebra with a unique complete norm. \\
\begin{romanenumerate}
\item Any Banach space $X$ such that $X\cong X\oplus X$, and any finite-dimensional Banach space; in particular, (i)-(viii) from the previous list.
\item Any reflexive Banach space, since $\mathscr{B}(X)/\mathscr{W}(X)= \{0\}$.
\item  $J_p$ for $1<p<\infty$, due to the fact that $\mathscr{B}(J_p)/\mathscr{W}(J_p)\cong \mathbb{K}$ \cite[p. 225]{EM}.
\item $J_p\oplus J_q$ for $1<p\leq q< \infty$. This follows by considering the operators on $J_p\oplus J_q$ as operator-valued $2\times 2$ matrices, and noting Loy and Willis' result \cite[Theorem 4.5]{LW} that for $1<p < q< \infty$, $\mathscr{B}(J_q,J_p)=\mathscr{K}(J_q,J_p)$ and $\mathscr{B}(J_p,J_q)= \mathscr{W}(J_p,J_q)\oplus \mathbb{K} I_{p,q}$, where $I_{p,q}$ denotes the formal inclusion. More generally, this holds for finite direct sums of James spaces.
\item $C[0,\omega_1]$, $X_{\normalfont{\text{AH}}}$, $X_k$ for $1\leq k \leq \infty$, and $X_{\normalfont{\text{KL}}}$. These Banach spaces are each preduals of $\ell_1(\mathbb{I})$ for some index set $\mathbb{I}$, and so, by the theorems of Schauder and Gantmacher (see, \emph{e.g.} \cite[Theorem 3.4.15 and Theorem 3.5.13]{Meg}), the weakly compact operators coincide with the compacts.
\end{romanenumerate}

\section{Calkin algebras with non-equivalent complete norms}
 
Our first lemma is well-known, but we give a proof for completeness. It shows that one condition for a Banach algebra to lack a unique complete norm is to lack any sort of algebra structure at all. Before we make a precise statement, recall that the \emph{unitisation} $\widetilde{A}$ of a Banach algebra $A$ (with scalar field $\mathbb{K}$) is the vector space $A\oplus \mathbb{K}$ equipped with the norm $||(a,\alpha)||= ||a||_A+|\alpha|$ and the product $(a,\alpha)(b,\beta)= (ab+\alpha b+\beta a, \alpha\beta)$. This turns $\widetilde{A}$ into a unital Banach algebra containing $A$ isometrically as an ideal of codimension one.

\begin{lemma} \label{nonunique} Let $(A,||\cdot||_A)$ be an infinite-dimensional Banach algebra such that $ab=0$ for every $a,b\in A$. Then $\widetilde{A}$ does not have a unique complete norm.
\end{lemma}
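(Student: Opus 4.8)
The plan is to exploit the fact that, because $A^2=\{0\}$, the algebra $A$ carries no multiplicative information, so that \emph{every} linear bijection of $A$ lifts to an algebra automorphism of $\widetilde{A}$; I would then transport the given norm along a \emph{discontinuous} such automorphism to obtain a second complete algebra norm that is inequivalent to the original.

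First I would record the structure of $\widetilde{A}$. Since $ab=0$ throughout $A$, the product on $\widetilde{A}=A\oplus\mathbb{K}$ reduces to $(a,\alpha)(b,\beta)=(\alpha b+\beta a,\alpha\beta)$, and $A$ (being a nilpotent ideal with semisimple quotient $\widetilde{A}/A\cong\mathbb{K}$) is the Jacobson radical. A direct check then shows that for any linear bijection $\phi\colon A\to A$ the map $\theta(a,\alpha)=(\phi(a),\alpha)$ is an algebra automorphism of $\widetilde{A}$: both $\theta\bigl((a,\alpha)(b,\beta)\bigr)$ and $\theta(a,\alpha)\,\theta(b,\beta)$ equal $(\alpha\phi(b)+\beta\phi(a),\alpha\beta)$ by linearity of $\phi$, and $\theta$ fixes the identity $(0,1)$.

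Next I would produce a discontinuous linear bijection of $A$. Because $A$ is infinite-dimensional there is a discontinuous linear functional $f\colon A\to\mathbb{K}$; choosing a nonzero $a_0$ in the (infinite-dimensional) kernel of $f$ and setting $\phi(x)=x+f(x)a_0$ yields a linear bijection with inverse $y\mapsto y-f(y)a_0$, and $\phi$ is discontinuous, since otherwise $x\mapsto f(x)a_0=\phi(x)-x$ would be bounded, forcing $f$ to be continuous.

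Finally, with $\theta$ the automorphism induced by this $\phi$, I would set $|||x|||:=\|\theta(x)\|$. Submultiplicativity of the original norm together with multiplicativity of $\theta$ make $|||\cdot|||$ an algebra norm, and pulling a complete norm back through the bijection $\theta$ keeps it complete; moreover $|||\cdot|||$ is inequivalent to $\|\cdot\|$ precisely because $\theta$---equivalently $\phi$---is unbounded. The one point demanding care, and the conceptual crux, is the automorphism computation of the second step: it is here that the hypothesis $A^2=\{0\}$ is essential, since exactly this degeneracy frees every linear bijection to respect the product and thereby supplies the abundance of discontinuous automorphisms that a semisimple algebra, by Johnson's theorem, could never possess.
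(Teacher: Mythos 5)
Your proof is correct and follows essentially the same route as the paper: both transport the given norm through a discontinuous linear bijection of $A$ extended to $\widetilde{A}$ by fixing the scalar component, so your $|||(a,\alpha)|||=\|\phi(a)\|_A+|\alpha|$ is literally the paper's new norm, merely packaged as pullback along an automorphism. The only cosmetic difference is how the discontinuous bijection is produced (your shear $x\mapsto x+f(x)a_0$ built from a discontinuous functional, versus the paper's direct Hamel-basis rescaling), and both verifications of the algebra-norm, completeness, and inequivalence properties proceed identically.
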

\begin{proof} Since $A$ is infinite-dimensional, there is a discontinuous linear bijection $\gamma: A\to A$. To see this, take a Hamel basis $(b_j)_{j\in J}$ for $A$ ($J$ some uncountable index set). 
Since $J$ is uncountable, we may write it as the disjoint union of an infinite sequence $(j_n)_{n\in \N}$ and an uncountable subset $J_0$. Define $\gamma(b_{j_n})= nb_{j_n}$ for $n\in \N$ and $\gamma({b_j})=b_j$ for $j\in J_0$, and extend by linearity. Then $\gamma$ is a bijection because it maps a basis onto a basis, and it is unbounded.
Define a new norm on $\widetilde{A}$ by
\begin{equation*} |||(a,\alpha)|||= ||\gamma(a)||_A+ |\alpha|
\end{equation*} for each $(a,\alpha)$ in $\widetilde{A}$. This is easily checked to be an algebra norm, since $\gamma$ is a linear bijection. Let $((a_n,\alpha_n))_{n=1}^{\infty}$ be Cauchy in $(\widetilde{A},|||\cdot|||)$. Then for each $\epsilon>0$ there exists $n_0$ such that 
\begin{equation*} |||(a_n,\alpha_n)-(a_m,\alpha_m)|||= ||\gamma(a_n-a_m)||_A+|\alpha_n-\alpha_m| <\epsilon
\end{equation*} for all $n,m\geq n_0$. So $(\alpha_n)_{n=1}^{\infty}$ is Cauchy in $\mathbb{K}$ and $(\gamma(a_n))_{n=1}^{\infty}$ is Cauchy in $(A,||\cdot||_A)$. Since $(A,||\cdot||_A)$ and $\mathbb{K}$ are complete there exist $\alpha \in \mathbb{K}$ and $a\in A$ which are the respective limits. 
Then because $\gamma$ is a bijection we have
\begin{equation*} |||(a_n,\alpha_n)- (\gamma^{-1}(a),\alpha)|||= ||\gamma(a_n-\gamma^{-1}(a))||_A+|\alpha_n-\alpha| \to 0
\end{equation*} as $n\to \infty$. Since $(\gamma^{-1}(a),\alpha)$ is in $\widetilde{A}$ we conclude that $\widetilde{A}$ is complete with respect to $|||\cdot|||$. 
Now suppose that there exists $C>0$ such that $|||(a,\alpha)|||\leq C||(a,\alpha)||$ for every $(a,\alpha)\in \widetilde{A}$. Then $|||(a,0)|||\leq C||(a,0)||$ for every $a\in A$, and so $||\gamma(a)||_A\leq C||a||_A$ for each $a\in A$; but $\gamma$ is unbounded, a contradiction. Therefore $|||\cdot|||$ is not equivalent to $||\cdot||$ on $\widetilde{A}$.
\end{proof}

We shall refer to a Banach algebra $A$ such that $ab=0$ for every $a,b \in A$ as having the \emph{trivial product}.
This lemma may seem a small observation, but it can be applied to certain quotient algebras quite effectively. Firstly, let us see that there is a Banach space with a Calkin algebra lacking a unique complete norm. Before we state the result we require one more piece of terminology. A bounded operator on a Banach space $X$ is \emph{strictly singular} if it is not bounded below on any infinite-dimensional subspace. We write $\mathscr{S}(X)$ for the closed ideal of $\mathscr{B}(X)$ consisting of strictly singular operators, and record the standard fact that $\mathscr{K}(X)\subseteq \mathscr{S}(X)$.
Argyros and Motakis \cite[Theorem A]{AM2} have recently constructed a Banach space with the following remarkable properties (the result for complex scalars is noted in the comments after \cite[Theorem B]{AM2}). 

\begin{theorem}[Argyros, Motakis] \label{AM} There exists a Banach space $X_{\normalfont{\text{AM}}}$ such that 
\begin{romanenumerate}
\item $X_{\normalfont{\text{AM}}}$ is reflexive and has a Schauder basis;
\item every operator $T\in \mathscr{B}(X_{\normalfont{\text{AM}}})$ can be uniquely written as $T=\lambda I_{X_{\normalfont{\text{AM}}}}+S$ for some $\lambda \in \mathbb{K}$ and $S$ strictly singular ($I_{X_{\normalfont{\text{AM}}}}$ denotes the identity operator);
\item the composition of any two strictly singular operators on $X_{\normalfont{\text{AM}}}$ is compact;
\item $\mathscr{S}(X_{\normalfont{\text{AM}}})$ is non-separable.
\end{romanenumerate}
\end{theorem}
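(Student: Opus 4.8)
The plan is to realise $X_{\text{AM}}$ as a reflexive space ``of hereditarily indecomposable type with very few operators,'' built in the tradition of Gowers--Maurey and the mixed Tsirelson constructions, and then to refine the norm so that the strictly singular ideal becomes simultaneously non-separable and nilpotent modulo compacts. First I would fix an increasing sequence of Schreier families $(\mathcal{S}_{n_j})_j$ together with weights $(1/m_j)_j$ and define a ground space as the mixed Tsirelson space $T[(\mathcal{S}_{n_j}, 1/m_j)_j]$, choosing the parameters $(m_j,n_j)$ lacunary enough that the space is reflexive: the lower estimates coming from the Schreier averages obstruct any embedding of $c_0$, the upper estimates obstruct any embedding of $\ell_1$, and a James-type dichotomy then yields a shrinking and boundedly complete basis, hence reflexivity. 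This secures \romanref{}{} property (i).

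Onto this ground norm I would graft a conditional, coded structure built from special sequences governed by an injective coding function $\sigma$, exactly as in the Gowers--Maurey and Argyros--Haydon spaces. The purpose of the coding is to force a ``few operators'' theorem: for every normalised block sequence $(x_k)$ one shows $\mathrm{dist}(T x_k, \mathbb{K} x_k)\to 0$, and this local statement is then upgraded, by diagonalising over an exhaustive family of block subspaces, to a global decomposition $T=\lambda I + S$ with $S$ strictly singular. This is property (ii); uniqueness of $\lambda$ is automatic, since a nonzero scalar multiple of the identity is bounded below on all of $X_{\text{AM}}$ and so cannot be strictly singular.

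The genuinely new demands are (iii) and (iv), and reconciling them is where I expect the main difficulty to lie. For (iii) I would aim to prove that every strictly singular operator is asymptotically supported off the diagonal in a way controlled by the Schreier hierarchy: that $S$ sends $\mathcal{S}_j$-admissible averages to vectors whose norm decays in $j$. Composing two such operators should land the image in a regime governed by a strictly higher family $\mathcal{S}_{j'}$, whose faster decay forces $S_1S_2$ to map weakly null sequences to norm null; since $X_{\text{AM}}$ is reflexive, this is precisely compactness. For (iv) I would start from an almost-disjoint family of continuum many infinite subsets of $\N$ and attach to each a strictly singular operator supported on the corresponding subsequence of the basis, arranging the family $(S_\gamma)$ to be pairwise separated in operator norm, whence $\mathscr{S}(X_{\text{AM}})$ is non-separable.

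The obstacle is the tension between (iii) and (iv): property (iv) asks for a continuum of essentially independent strictly singular operators, while property (iii) asks that any two strictly singular operators annihilate one another modulo $\mathscr{K}(X_{\text{AM}})$. The resolution must encode into the norm that the range directions available to one strictly singular operator are asymptotically invisible to the domain sensitivity of another, so that each $S_\gamma$ is individually large yet all products collapse to compact operators. Designing one coding and one system of Schreier constraints that delivers this ``horizontally large, vertically $2$-nilpotent'' behaviour is the crux, and is exactly the technical heart of the Argyros--Motakis construction.
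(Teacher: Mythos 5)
The first thing to say is that the paper does not prove this statement at all: it is quoted verbatim from Argyros and Motakis \cite{AM2} (Theorem A there, with the complex case noted after Theorem B), and the entire content of the theorem is the construction carried out in that preprint. So your attempt is being measured against a full research paper rather than a paragraph, and judged on that scale it is an outline of the standard Gowers--Maurey/mixed-Tsirelson template rather than a proof. Parts (i) and (ii) of your sketch are the familiar part of that template and are plausible as stated (and your observation that uniqueness of $\lambda$ in (ii) is automatic is correct). The genuine gaps are in (iii) and (iv), and you have in effect conceded the main one yourself: you write that designing a coding and a system of Schreier constraints that makes every strictly singular operator ``horizontally large, vertically $2$-nilpotent'' is ``the crux'' and ``the technical heart of the Argyros--Motakis construction'' --- which is true, but it means the decisive step is named rather than carried out. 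The mechanism that actually delivers it in \cite{AM2} is their ``dual method'' of saturating the norming set under constraints, and nothing in your outline substitutes for it.

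There is also a concrete error of emphasis in your plan for (iv). In a space satisfying (ii), the difficulty is not arranging continuum many strictly singular operators to be pairwise far apart; it is producing even one bounded, non-compact, strictly singular operator. You propose to ``attach to each [member of an almost disjoint family] a strictly singular operator supported on the corresponding subsequence of the basis,'' but in a hereditarily indecomposable space with very few operators a map defined by its action on a subsequence of the basis (a diagonal-type or restriction-type operator) has no reason to be bounded --- boundedness of such maps is exactly what the construction is designed to destroy. Even for the original Gowers--Maurey space, the existence of a single strictly singular non-compact operator is a nontrivial theorem of Androulakis and Schlumprecht. So your step ``arrange the family $(S_\gamma)$ to be pairwise separated in operator norm'' presumes the hardest part of (iv) already done. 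Similarly, your argument for (iii) --- that composing two operators ``should land the image in a regime governed by a strictly higher family'' --- is a heuristic, not a deduction; the one correct ingredient there is that on a reflexive space an operator taking weakly null sequences to norm null sequences is compact. In short: the skeleton is recognisably the right one, but the theorem lives entirely in the flesh you have left out, and the paper under review rightly treats the whole statement as a citation rather than something reprovable in situ.
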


\begin{theorem} \label{AMCalkin} The Calkin algebra $\mathscr{B}(X_{\normalfont{\text{AM}}})/\mathscr{K}(X_{\normalfont{\text{AM}}})$ of the space of Argyros and Motakis has at least two non-equivalent complete algebra norms. 
\end{theorem}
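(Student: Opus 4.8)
The plan is to identify the Calkin algebra of $X_{\text{AM}}$, as a Banach algebra, with the unitisation of an infinite-dimensional algebra carrying the trivial product, and then to apply Lemma \ref{nonunique}. Write $X=X_{\text{AM}}$, put $C=\mathscr{B}(X)/\mathscr{K}(X)$, and let $q\colon\mathscr{B}(X)\to C$ be the quotient map. The starting point is property (ii): the map $\chi\colon\mathscr{B}(X)\to\mathbb{K}$ sending $T=\lambda I+S$ (with $S$ strictly singular) to $\lambda$ is well defined, and since $\mathscr{S}(X)$ is an ideal it is multiplicative, so $\chi$ is a character on the unital Banach algebra $\mathscr{B}(X)$ and is therefore automatically continuous, with $\ker\chi=\mathscr{S}(X)$. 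Because $\mathscr{K}(X)\subseteq\mathscr{S}(X)=\ker\chi$, the functional $\chi$ descends to a continuous character $\pi$ on $C$ with $\pi(q(I))=1$ and kernel $A:=\mathscr{S}(X)/\mathscr{K}(X)$.

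Next I would record the two structural features of $A$. First, property (iii) says that $ST$ is compact whenever $S,T$ are strictly singular, so $q(S)q(T)=q(ST)=0$ in $C$; thus $A$ has the \emph{trivial product}. Second, $A$ is infinite-dimensional. For this I would use property (i): since $X$ is reflexive with a Schauder basis, both $X$ and $X^{*}$ are separable and the finite-rank operators are dense in $\mathscr{K}(X)$, so $\mathscr{K}(X)$ is separable. If $A$ were finite-dimensional, then $\mathscr{S}(X)$, being an extension of a finite-dimensional (hence separable) space by the separable space $\mathscr{K}(X)$, would itself be separable, contradicting property (iv). Hence $\dim A=\infty$.

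With these facts in hand, the continuous character $\pi$ provides the topological and algebraic splitting $C=\mathbb{K}\,q(I)\oplus A$, the decomposition of $c\in C$ being $c=\pi(c)\,q(I)+\bigl(c-\pi(c)\,q(I)\bigr)$, with both projections bounded. Because $q(I)$ is the identity of $C$ and $A$ has the trivial product, the multiplication on $C$ is precisely that of the unitisation: the map $\widetilde{A}\to C$, $(a,\alpha)\mapsto a+\alpha\,q(I)$, is an algebra isomorphism, and it is a topological isomorphism since $\pi$ is bounded. Thus $C\cong\widetilde{A}$ as Banach algebras. As $A$ is infinite-dimensional with the trivial product, Lemma \ref{nonunique} supplies a complete algebra norm on $\widetilde{A}$ inequivalent to its given norm; transporting it through the isomorphism yields a complete algebra norm on $C$ inequivalent to the quotient norm, which is the claim.

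I expect the only genuinely delicate point to be the infinite-dimensionality of $A$: everything else is a matter of assembling properties (i)--(iii), but it is precisely the non-separability clause (iv) — together with the separability of $\mathscr{K}(X)$ coming from (i) — that guarantees $\mathscr{S}(X)/\mathscr{K}(X)$ is large enough for the lemma to bite. The conceptual heart of the argument is the twin observation that (ii) makes $C$ a unitisation and (iii) makes the ideal part a trivial-product algebra.
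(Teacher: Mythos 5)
Your proposal is correct and follows essentially the same route as the paper: identify the Calkin algebra with the unitisation of $\mathscr{S}(X_{\text{AM}})/\mathscr{K}(X_{\text{AM}})$ via Theorem \ref{AM}(ii), deduce the trivial product from (iii), establish infinite-dimensionality from the separability of $\mathscr{K}(X_{\text{AM}})$ together with (iv) and the three-space property, and invoke Lemma \ref{nonunique}. Your explicit use of the continuous character to justify that the splitting is topological is a slightly more careful packaging of a step the paper treats as immediate, but it is not a different argument.
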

\begin{proof} Observe that by Theorem \ref{AM}(ii) we have 
\begin{equation*} \mathscr{B}(X_{\normalfont{\text{AM}}})/\mathscr{K}(X_{\normalfont{\text{AM}}})=\mathscr{S}(X_{\normalfont{\text{AM}}})/\mathscr{K}(X_{\normalfont{\text{AM}}}) \oplus \mathbb{K} (I_{X_{\normalfont{\text{AM}}}}+\mathscr{K}(X)) \end{equation*}
and so the Calkin algebra is the unitisation of the Banach algebra 
$\mathscr{S}(X_{\normalfont{\text{AM}}})/\mathscr{K}(X_{\normalfont{\text{AM}}})$.
We shall show that $\mathscr{S}(X_{\normalfont{\text{AM}}})/\mathscr{K}(X_{\normalfont{\text{AM}}})$ is infinite-dimensional and has the trivial product, and then implement Lemma \ref{nonunique}.
Suppose towards a contradiction that $\mathscr{S}(X_{\normalfont{\text{AM}}})/\mathscr{K}(X_{\normalfont{\text{AM}}})$ is finite-dimensional; then it is separable.
By Theorem \ref{AM}(i), $X_{\normalfont{\text{AM}}}$ is separable (since it has a basis) and reflexive, so $X_{\normalfont{\text{AM}}}^*$ is separable. Take countable dense subsets $M$ and $N$ of $X_{\normalfont{\text{AM}}}$ and $X_{\normalfont{\text{AM}}}^*$, respectively. Then it is easy to check that $\spa\{x\otimes f: x\in M, f\in N\}$ is dense in $\mathscr{F}(X_{\normalfont{\text{AM}}})$ (the ideal of finite rank operators). Since $X_{\normalfont{\text{AM}}}$ has a basis, $\mathscr{F}(X_{\normalfont{\text{AM}}})$ is dense in $\mathscr{K}(X_{\normalfont{\text{AM}}})$, and so in fact $\spa\{x\otimes f: x\in M, f\in N\}$ is dense in $\mathscr{K}(X_{\normalfont{\text{AM}}})$. Therefore $\mathscr{K}(X_{\normalfont{\text{AM}}})$ is separable.
But the fact that separability is a three-space property \cite[Corollary 1.12.10]{Meg} means that $\mathscr{S}(X_{\normalfont{\text{AM}}})$ must also be separable, contradicting Theorem \ref{AM}(iv). Therefore $\mathscr{S}(X_{\normalfont{\text{AM}}})/\mathscr{K}(X_{\normalfont{\text{AM}}})$ is infinite-dimensional. The fact that it has the trivial product follows from Theorem \ref{AM}(iii), and so we may apply Lemma \ref{nonunique} to obtain the result.
\end{proof}

\begin{corollary} The Calkin algebra $\mathscr{B}(X_{\normalfont{\text{AM}}}^*)/\mathscr{K}(X_{\normalfont{\text{AM}}}^*)$ has at least two non-equivalent complete algebra norms. 
\end{corollary}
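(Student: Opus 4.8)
The plan is to deduce the corollary from Theorem \ref{AMCalkin} by transporting the conclusion across the duality $X_{\normalfont{\text{AM}}}\leftrightarrow X_{\normalfont{\text{AM}}}^*$ via the adjoint operation. This is attractive because it avoids re-examining the internal structure of $X_{\normalfont{\text{AM}}}^*$ directly: one could instead try to show that $X_{\normalfont{\text{AM}}}^*$ itself satisfies the hypotheses of Theorem \ref{AM}, but that would require controlling the strictly singular operators on the dual (strict singularity is notoriously not self-dual in general), whereas the duality argument needs only the compact operators, which are well behaved under adjoints.

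First I would observe that, since $X_{\normalfont{\text{AM}}}$ is reflexive by Theorem \ref{AM}(i), the adjoint map $\Phi\colon \mathscr{B}(X_{\normalfont{\text{AM}}})\to \mathscr{B}(X_{\normalfont{\text{AM}}}^*)$, $T\mapsto T^*$, is an isometric linear bijection: it is isometric because $\|T^*\|=\|T\|$, and surjective because reflexivity identifies $X_{\normalfont{\text{AM}}}^{**}$ with $X_{\normalfont{\text{AM}}}$, so every $S\in \mathscr{B}(X_{\normalfont{\text{AM}}}^*)$ is the adjoint of its own pre-adjoint. Crucially $\Phi$ reverses products, $(ST)^*=T^*S^*$, so it is an \emph{anti}-isomorphism of Banach algebras. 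By Schauder's theorem $T$ is compact if and only if $T^*$ is, so $\Phi$ carries $\mathscr{K}(X_{\normalfont{\text{AM}}})$ onto $\mathscr{K}(X_{\normalfont{\text{AM}}}^*)$ (again using reflexivity for surjectivity); hence $\Phi$ descends to an isometric anti-isomorphism $\overline{\Phi}$ between the two Calkin algebras.

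Next I would check that the presence of two non-equivalent complete algebra norms is preserved under an algebra anti-isomorphism. Given any complete algebra norm $|||\cdot|||$ on $\mathscr{B}(X_{\normalfont{\text{AM}}})/\mathscr{K}(X_{\normalfont{\text{AM}}})$, push it forward by setting $|||y|||':=|||\overline{\Phi}^{-1}(y)|||$ for $y$ in the Calkin algebra of $X_{\normalfont{\text{AM}}}^*$. Since $\overline{\Phi}^{-1}$ is a linear bijection reversing products, the submultiplicative inequality transfers after swapping the two factors --- which changes nothing, as the scalars $|||\overline{\Phi}^{-1}(a)|||$ and $|||\overline{\Phi}^{-1}(b)|||$ commute --- so $|||\cdot|||'$ is again a complete algebra norm; moreover, because $\overline{\Phi}$ is a fixed linear isomorphism, two norms on the domain are equivalent precisely when their push-forwards are. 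Applying this to the quotient norm together with the inequivalent complete norm furnished by Theorem \ref{AMCalkin} produces two non-equivalent complete algebra norms on $\mathscr{B}(X_{\normalfont{\text{AM}}}^*)/\mathscr{K}(X_{\normalfont{\text{AM}}}^*)$, as required. The argument presents no serious obstacle; the only point demanding care is the order-reversal in $\Phi$, where one must resist treating $T\mapsto T^*$ as a homomorphism and instead note that submultiplicativity is symmetric in its two factors, so an anti-isomorphism transports algebra norms faithfully.
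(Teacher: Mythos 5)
Your proposal is correct and follows essentially the same route as the paper: both use reflexivity of $X_{\normalfont{\text{AM}}}$ and Schauder's theorem to show that $T+\mathscr{K}(X_{\normalfont{\text{AM}}})\mapsto T^*+\mathscr{K}(X_{\normalfont{\text{AM}}}^*)$ is an isometric anti-isomorphism of the Calkin algebras, and then transport the two non-equivalent complete algebra norms from Theorem \ref{AMCalkin} across it. Your explicit remark that submultiplicativity survives the order-reversal because the real numbers $|||a|||$ and $|||b|||$ commute is a detail the paper leaves implicit, but the argument is the same.
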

\begin{proof} (cf. \cite[Theorem 1.2.2]{Ware}) By Theorem \ref{AM}(i), $X_{\normalfont{\text{AM}}}$ is reflexive, and so the map
 \begin{equation*} T+\mathscr{K}(X_{\normalfont{\text{AM}}})\mapsto T^*+\mathscr{K}(X_{\normalfont{\text{AM}}}^*),\quad \mathscr{B}(X_{\normalfont{\text{AM}}})/\mathscr{K}(X_{\normalfont{\text{AM}}})\to \mathscr{B}(X_{\normalfont{\text{AM}}}^*)/\mathscr{K}(X_{\normalfont{\text{AM}}}^*) \end{equation*}
  is an isometric anti-isomorphism by Schauder's Theorem. 
Theorem \ref{AMCalkin} allows us to choose two non-equivalent complete algebra norms on $\mathscr{B}(X_{\normalfont{\text{AM}}})/\mathscr{K}(X_{\normalfont{\text{AM}}})$, and these pass to non-equivalent complete algebra norms on $\mathscr{B}(X_{\normalfont{\text{AM}}}^*)/\mathscr{K}(X_{\normalfont{\text{AM}}}^*)$ using the anti-isomorphism.
\end{proof}

Since $X_{\normalfont{\text{AM}}}$ is reflexive we see that the weak Calkin algebra of $X_{\normalfont{\text{AM}}}$ has a unique complete norm. 
What can we say about weak Calkin algebras in general? We have seen that for many Banach spaces $X$, $\mathscr{B}(X)/\mathscr{W}(X)$ has a unique complete norm. Let us now give an example where this is not true. In \cite{read}, Read constructed a Banach space $E_{\normalfont{\text{R}}}$ which has a discontinuous derivation from $\mathscr{B}(E_{\normalfont{\text{R}}})$ into a Banach $\mathscr{B}(E_{\normalfont{\text{R}}})$-bimodule, the first example of a Banach space with such a property.
Our next theorem follows directly from his results, using Lemma \ref{nonunique} (Read states his results for complex scalars only, but they carry over verbatim to the real case).


\begin{theorem} The weak Calkin algebra $\mathscr{B}(E_{\normalfont{\text{R}}})/\mathscr{W}(E_{\normalfont{\text{R}}})$ of the space of Read has at least two non-equivalent complete algebra norms. 
\end{theorem}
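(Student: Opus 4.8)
The plan is to treat this as the weak-Calkin counterpart of Theorem \ref{AMCalkin}: the mechanism is again Lemma \ref{nonunique}, so the whole task reduces to exhibiting $\mathscr{B}(E_{\normalfont{\text{R}}})/\mathscr{W}(E_{\normalfont{\text{R}}})$ as the unitisation $\widetilde{A}$ of some \emph{infinite-dimensional} Banach algebra $A$ carrying the \emph{trivial product}. Once this identification is secured, the two non-equivalent complete algebra norms are produced verbatim as in Lemma \ref{nonunique}, by transporting the given quotient norm along a discontinuous linear bijection of $A$.

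The substance is therefore extracting the right structural facts from Read's construction \cite{read}. Concretely, I would isolate from his results a closed ideal $\mathscr{J}$ of $\mathscr{B}(E_{\normalfont{\text{R}}})$ containing $\mathscr{W}(E_{\normalfont{\text{R}}})$ and satisfying $\mathscr{B}(E_{\normalfont{\text{R}}})=\mathbb{K}I_{E_{\normalfont{\text{R}}}}+\mathscr{J}$ with $\mathbb{K}I_{E_{\normalfont{\text{R}}}}\cap \mathscr{J}\subseteq \mathscr{W}(E_{\normalfont{\text{R}}})$, so that the weak Calkin algebra decomposes as $\mathbb{K}(I_{E_{\normalfont{\text{R}}}}+\mathscr{W}(E_{\normalfont{\text{R}}}))\oplus A$ with $A=\mathscr{J}/\mathscr{W}(E_{\normalfont{\text{R}}})$; this is precisely the statement that it is a unitisation $\widetilde{A}$. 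Two further facts about $A$ are then needed, both of which are the point of Read's example. First, $A$ must be infinite-dimensional; this plays the role that non-separability of $\mathscr{S}(X_{\normalfont{\text{AM}}})$ played in Theorem \ref{AMCalkin}, and it is what allows a discontinuous linear bijection of $A$ to exist (equivalently, it underlies the discontinuous derivation Read constructs). Second, $A$ must have the trivial product, i.e. $\mathscr{J}^2\subseteq \mathscr{W}(E_{\normalfont{\text{R}}})$; this is the analogue of Theorem \ref{AM}(iii). As a consistency check, an algebra of the form $\mathbb{K}\oplus A$ with $A^2=\{0\}$ and $\dim A=\infty$ is exactly one admitting discontinuous derivations — any linear map $A\to A$ extends to a derivation into the module $A$ — so this structure is consonant with the purpose of \cite{read}.

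With these facts in hand the proof closes at once: $A$ is an infinite-dimensional Banach algebra with $ab=0$ for all $a,b\in A$, so $\widetilde{A}=\mathscr{B}(E_{\normalfont{\text{R}}})/\mathscr{W}(E_{\normalfont{\text{R}}})$ lacks a unique complete norm by Lemma \ref{nonunique}. I expect the main obstacle to be expository rather than conceptual: Read frames his work around the existence of a discontinuous derivation and the explicit space he builds, not around norms or the weak Calkin algebra as such, so the real work is to read off from \cite{read} the scalar-plus-$\mathscr{J}$ decomposition, the infinite-dimensionality of $A$, and the inclusion $\mathscr{J}^2\subseteq \mathscr{W}(E_{\normalfont{\text{R}}})$ in exactly the form needed to identify the quotient as a unitisation. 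The passage from complex to real scalars raises no difficulty, as already noted.
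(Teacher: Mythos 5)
Your proposal is correct and follows essentially the same route as the paper: Read's results (see \cite[p.~306]{read}) supply exactly the codimension-one ideal $J$ with $J^2\subseteq \mathscr{W}(E_{\normalfont{\text{R}}})\subseteq J$ and the infinite-dimensionality of $\mathscr{B}(E_{\normalfont{\text{R}}})/\mathscr{W}(E_{\normalfont{\text{R}}})$, from which the weak Calkin algebra is identified as the unitisation of the infinite-dimensional trivial-product algebra $J/\mathscr{W}(E_{\normalfont{\text{R}}})$ and Lemma \ref{nonunique} applies. The facts you flag as ``to be extracted from \cite{read}'' are precisely the ones the paper quotes, so there is no gap.
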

\begin{proof} In \cite[see p. 306]{read}, Read proves that there is an ideal $J$ of codimension one in $\mathscr{B}(E_{\normalfont{\text{R}}})$ such that $J^2\subseteq \mathscr{W}(E_{\normalfont{\text{R}}})\subseteq J$ (where $J^2= \spa\{ab: a,b\in J\}$), and $\mathscr{B}(E_{\normalfont{\text{R}}})/\mathscr{W}(E_{\normalfont{\text{R}}})$ is infinite-dimensional. From this, it follows that 
\begin{equation*} \mathscr{B}(E_{\normalfont{\text{R}}})/\mathscr{W}(E_{\normalfont{\text{R}}})= J/\mathscr{W}(E_{\normalfont{\text{R}}}) \oplus \mathbb{K}(I_{E_{\normalfont{\text{R}}}}+\mathscr{W}(E_{\normalfont{\text{R}}}))
\end{equation*}
where $I_{E_{\normalfont{\text{R}}}}$ is the identity operator. Since $J^2\subseteq \mathscr{W}(E_{\normalfont{\text{R}}})$, we see immediately that $J/\mathscr{W}(E_{\normalfont{\text{R}}})$ has the trivial product, and the fact that $\mathscr{B}(E_{\normalfont{\text{R}}})/\mathscr{W}(E_{\normalfont{\text{R}}})$ is infinite-dimensional ensures that $J/\mathscr{W}(E_{\normalfont{\text{R}}})$ is too. Thus we may apply Lemma \ref{nonunique} to the Banach algebra $J/\mathscr{W}(E_{\normalfont{\text{R}}})$ to obtain the result.
\end{proof}


\begin{remark} We do not know if $\mathscr{B}(E_{\normalfont{\text{R}}})/\mathscr{K}(E_{\normalfont{\text{R}}})$ has a unique complete norm.
\end{remark}




\noindent \textbf{Acknowledgements.}
The author would like to thank Dr N.J. Laustsen for his help and guidance during the preparation of this note, and acknowledges financial support from the Lancaster University Faculty of Science and Technology. 

\bibliographystyle{amsplain}

\end{document}